\documentclass[12pt]{article}

\usepackage[margin=1in]{geometry}
\usepackage{amsmath,amssymb,amsthm}

\newtheorem{thm}{Theorem}[section]

\newtheorem{lemma}[thm]{Lemma}

\newcommand{\R}{\mathbb R}

\title{Borsuk--Ulam type theorem for the \\
orthogonal group and orthogonal four-partitions}
\author{Oleg R. Musin}
\date{}

\begin{document}
\maketitle

\begin{abstract}
Makeev~\cite{Mak07} stated that every finite Borel measure in $\mathbb R^d$ assigning
zero mass to hyperplanes can be cut by $d$ mutually orthogonal
hyperplanes so that every pair divides the measure into four equal
parts, and outlined a proof strategy, but the key steps were left
incomplete.  We give a direct and elementary proof.

The main ingredient is a Borsuk--Ulam theorem for $O(k)$: every
$B_k$-equivariant map from $O(k)$ to a natural representation of the
hyperoctahedral group $B_k$ has a zero.  An explicit model map has one
free orbit of zeros, consisting of the signed eigenbases of a diagonal
operator with simple spectrum.  A derivative computation and mod-$2$
equivariant degree complete the proof.  This is a self-contained proof
of a special case of the general Stiefel-manifold theorem in~\cite{Mus26}.
\end{abstract}

\medskip
\noindent\textbf{Keywords:} Borsuk--Ulam theorem, hyperoctahedral group,
Stiefel manifold, equivariant degree, mass partition, ham sandwich.

\section{Introduction}

The \emph{hyperoctahedral group} $B_k = A_k \rtimes S_k$ is the
semidirect product of the sign-change group
$A_k = (\mathbb{Z}/2)^k = \langle\lambda_1,\ldots,\lambda_k\rangle$
and the symmetric group $S_k$, where $S_k$ acts by
$\sigma(\lambda_i)=\lambda_{\sigma(i)}$.
The generator $\lambda_\ell \in A_k$ acts on $\mathbb{R}^k$ by
negating the $\ell$-th coordinate.

For each pair $\{i,j\}\subset[k]$ with $i<j$, let
$\mathbb{R}_{\lambda_i+\lambda_j}$ be the one-dimensional real
$A_k$-representation on which $\lambda_\ell$ acts by $-1$ if
$\ell\in\{i,j\}$ and by $+1$ otherwise.  Set
\[
 W_2^+ = \bigoplus_{1\le i<j\le k}\mathbb{R}_{\lambda_i+\lambda_j},
\]
extended to a $B_k$-representation by
$\sigma\,e_{\{i,j\}}=e_{\{\sigma(i),\sigma(j)\}}$.
One checks that $(W_2^+)^{B_k}=\{0\}$ and
$\dim W_2^+=\binom{k}{2}$.  Indeed, if a vector is fixed by
$\lambda_i$, every coefficient indexed by a pair containing $i$ must
equal its own negative; varying $i$ forces all coefficients to vanish.

\medskip

The Stiefel manifold $V_{k,k}=O(k)$ carries a free $B_k$-action:
$\lambda_\ell$ negates the $\ell$-th frame vector and $S_k$
permutes them.  Since $\dim W_2^+=\binom{k}{2}=\dim O(k)$,
the following zero theorem has the right dimensional setup.

\begin{thm}\label{main}
Every continuous $B_k$-equivariant map $f:O(k)\to W_2^+$ has a zero.
\end{thm}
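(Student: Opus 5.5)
We will prove Theorem~\ref{main} by comparing an arbitrary equivariant map with an explicit model map through a mod-$2$ equivariant degree. Since the $B_k$-action on $O(k)$ is free and $\dim O(k)=\dim W_2^+$, the standard equivariant obstruction theory for free actions applies. For two $B_k$-equivariant maps $f_0,f_1:O(k)\to W_2^+$, the straight-line homotopy $f_t=(1-t)f_0+tf_1$ is again equivariant. After an equivariant generic perturbation, which is possible because the action is free, the zero set of the homotopy in $O(k)\times[0,1]$ is a compact $1$-manifold invariant under $B_k$ with free action. Counting free orbits of zeros mod $2$ therefore gives a homotopy invariant $\deg_{B_k}(f)\in\mathbb Z/2$, provided the zeros of $f$ are transverse. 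If some equivariant $f$ had no zeros, this count would be $0$. So it suffices to exhibit one equivariant map whose zeros form a single free orbit of nondegenerate zeros.

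For the model, fix a diagonal matrix $D=\mathrm{diag}(d_1,\dots,d_k)$ with distinct entries. For a frame $Q=(q_1,\dots,q_k)\in O(k)$, set
\[
 F(Q)_{\{i,j\}} = \langle q_i, D q_j\rangle ,\qquad i<j .
\]
Equivariance is checked directly. Negating $q_\ell$ multiplies exactly the coordinates with $\ell\in\{i,j\}$ by $-1$, and a permutation $\sigma$ sends the coordinate $\{i,j\}$ to $\{\sigma(i),\sigma(j)\}$; the bilinear form is symmetric, so order within a pair does not matter. The zeros of $F$ are the frames in which the matrix $Q^TDQ$ is diagonal, that is, orthonormal eigenbases of $D$. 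Since the spectrum of $D$ is simple, these are exactly the signed permutation matrices. They form precisely one $B_k$-orbit, which is free, of size $2^k k!$.

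The main step is nondegeneracy at a zero, and by equivariance it suffices to check it at $Q=I$. The tangent space there is the space of skew-symmetric matrices $A$, with curve $Q=\exp(tA)$ and $q_i=e_i+tAe_i+O(t^2)$. Then
\[
 \tfrac{d}{dt}\langle q_i,Dq_j\rangle\big|_{t=0} = \langle Ae_i,De_j\rangle+\langle e_i,DAe_j\rangle = d_jA_{ji}+d_iA_{ij} = (d_i-d_j)A_{ij}.
\]
So the derivative is diagonal in the basis $\{A_{ij}\}_{i<j}$ with nonzero entries $d_i-d_j$, and hence it is an isomorphism. This gives $\deg_{B_k}(F)=1$.

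Therefore every equivariant $f$ is equivariantly homotopic to $F$. If $f$ had no zeros, its degree would be $0\neq 1$, a contradiction. For a general continuous $f$ with zeros, no transversality is needed, since we only need existence. A version without zeros would be approximated by a smooth equivariant map that still has no zeros, and this approximation is obtained by averaging over the finite group $B_k$.

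The delicate point is the rigorous definition of the mod-$2$ equivariant degree. One needs that generic equivariant perturbations exist, and that the boundary count of the $1$-manifold of zeros of the homotopy is even in orbit terms. Both follow from freeness: we pass to the quotient $O(k)/B_k$, where the map becomes a section of a vector bundle, and the usual mod-$2$ count of zeros of a generic section applies. This count is the top Stiefel--Whitney class of the bundle associated to $W_2^+$, evaluated on the fundamental class of $O(k)/B_k$. The model computation shows that this number is nonzero.
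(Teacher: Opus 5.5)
Your proposal is correct and follows the paper's proof essentially step for step. It uses the same diagonal-operator model map $Q\mapsto(\langle q_i,Dq_j\rangle)_{i<j}$, the same identification of its zero set with the single free orbit of signed permutation matrices, the same derivative $(d_i-d_j)A_{ij}$ at the identity, and the same mod-$2$ parity principle; the paper states that principle directly as $\langle w_n(M\times_G V),[M/G]\rangle=1$ over $O(k)/B_k$, whereas you first phrase it as a cobordism count of zero orbits and then identify it with that class.
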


\medskip

The classical ham-sandwich theorem~\cite{ST} guarantees that any
$d$ measures in $\mathbb{R}^d$ can be simultaneously bisected by a
single hyperplane.  For $d=2$, a single measure can already be
divided into four equal parts by two orthogonal lines~\cite{Shashkin}.

For general $d$, Makeev~\cite{Mak07} stated the following result.

\begin{thm}\label{mak}
Let $d\ge2$ and let $\mu$ be a finite Borel measure in $\mathbb{R}^d$
for which every hyperplane has measure zero.  Then there exist $d$
mutually orthogonal hyperplanes such that every pair of them divides
$\mu$ into four equal parts.
\end{thm}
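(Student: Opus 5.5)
We derive Theorem~\ref{mak} from Theorem~\ref{main} with $k=d$, using a configuration-space/test-map argument. The configuration space is $O(d)$: a frame $U=(u_1,\dots,u_d)\in O(d)$ gives the orthogonal directions of the hyperplanes. The offsets come first, by centering. For each unit vector $u$ let $c(u)$ be the midpoint of the (possibly degenerate) interval of $t$ with $\mu(\{x:\langle x,u\rangle\le t\})=\mu(\R^d)/2$. Since hyperplanes are $\mu$-null, $c$ is continuous and satisfies $c(-u)=-c(u)$. Put
\[
H_\ell=\{x:\langle x,u_\ell\rangle=c(u_\ell)\}.
\]
Each $H_\ell$ bisects $\mu$, so each pair $H_i,H_j$ already cuts $\mu$ into four quadrants of the form $a,b,b,a$ (the masses of two opposite quadrants agree). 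The only remaining condition per pair is therefore one scalar equation.

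For $i<j$ define
\[
f_{ij}(U)=\mu(Q^{++}_{ij})+\mu(Q^{--}_{ij})-\mu(Q^{+-}_{ij})-\mu(Q^{-+}_{ij}),
\]
where $Q^{\epsilon\delta}_{ij}$ is the set where $\operatorname{sign}(\langle x,u_i\rangle-c(u_i))=\epsilon$ and $\operatorname{sign}(\langle x,u_j\rangle-c(u_j))=\delta$. Set $f=(f_{ij})\in W_2^+$. Continuity of $f$ follows from dominated convergence together with continuity of $c$ and the fact that hyperplanes are null.

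Next we check equivariance. Replacing $u_\ell$ by $-u_\ell$ sends $c(u_\ell)$ to $-c(u_\ell)$, so $H_\ell$ is unchanged and only the sign labels of its sides are swapped. Hence $f_{ij}$ changes sign exactly when $\ell\in\{i,j\}$, which is the action of $\lambda_\ell$ on $\R_{\lambda_i+\lambda_j}$. A permutation $\sigma$ of the frame vectors sends the $\{i,j\}$ coordinate to the $\{\sigma(i),\sigma(j)\}$ coordinate, since $f_{ij}$ is symmetric in $i,j$. We must be careful that this matches the convention $\sigma e_{\{i,j\}}=e_{\{\sigma(i),\sigma(j)\}}$ and the direction of the $S_k$-action on $O(k)$; if the paper uses the inverse convention, we reindex accordingly. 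Thus $f$ is $B_d$-equivariant, and Theorem~\ref{main} gives a zero $U$.

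At a zero, for every pair $i<j$ we have both $a+b+b+a=\mu(\R^d)$ and $2a-2b=0$. Hence all four quadrants have mass $\mu(\R^d)/4$, which proves the theorem.

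The main obstacle is continuity and oddness of $c$ when the bisecting set is an interval rather than a point (this happens when $\mu$ has gaps in its support). Choosing the midpoint handles both properties. Continuity of the two endpoints under perturbation of $u$ needs a short semicontinuity argument using that hyperplanes are $\mu$-null.
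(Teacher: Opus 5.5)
Your test map, the observation that two bisecting hyperplanes already force the pattern $a,b,b,a$, the equivariance check and the final count all agree with the paper's smooth case. The gap is in the step meant to let you skip the paper's approximation argument. The midpoint $c(u)$ is in general \emph{not} continuous, and no semicontinuity argument can make it so. That hyperplanes are $\mu$-null only gives joint continuity of $G(u,t)=\mu\{x:\langle x,u\rangle\le t\}$. From this you get that limit points of the endpoints $a(u_n),b(u_n)$ of the bisecting intervals lie in $[a(u),b(u)]$, so $a$ is lower and $b$ upper semicontinuous, and nothing more. When $\mu$ has unbounded support, the interval can collapse onto different ends from the two sides.

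For instance, let $d=2$ and let $\mu$ have density $\tfrac12\bigl(\mathbf{1}_{[-1,0]}(x_1)+\mathbf{1}_{[1,2]}(x_1)\bigr)e^{-x_2}$ on $\{x_2>0\}$; all lines are null. At $u=e_1$ the bisecting interval is $[0,1]$. Take $u_\theta=(\cos\theta,\sin\theta)$ with small $\theta>0$. The right block projects into $[\cos\theta,\infty)$, while the left block leaks a little mass past every $t$. So the bisecting offset is at least $\cos\theta$ and tends to $1$. For small $\theta<0$, the left block projects into $(-\infty,0]$, while the right block leaks towards $-\infty$. So the offset is negative and tends to $0$. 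Hence no choice of bisecting offsets, midpoint or otherwise, is continuous at $e_1$. Your $f$ is then not a continuous map $O(d)\to W_2^+$ for such $\mu$, and Theorem~\ref{main} cannot be applied to it.

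What your argument does prove is the compactly supported case. If $\mu$ lives in $B_R$, then $G(u',t-\varepsilon)\le G(u,t)\le G(u',t+\varepsilon)$ with $\varepsilon=R|u-u'|$, which makes $a$, $b$ and $c$ $R$-Lipschitz. To reach the stated generality you must add a limiting step like the paper's:
\begin{enumerate}
\item apply the compact case to $\mu|_{B_R}$;
\item observe that the offsets stay bounded, since every bisector meets a fixed ball carrying more than half the mass;
\item pass to a convergent subsequence of frames and offsets;
\item show the quadrant masses converge, using $\mu(\mathbb{R}^d\setminus B_R)\to0$ and the fact that the limiting quadrant boundaries lie in $\mu$-null hyperplanes.
\end{enumerate}
The paper runs the same limit with $\mu*\gamma_{1/r}$ in place of truncation. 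This has the extra advantage that the bisector is unique, so no selection problem ever arises. The approximation step is therefore unavoidable, and your proposal's claim to avoid it is exactly where it breaks.
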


Makeev outlines a dimension-count and parity argument, but leaves
without proof the key steps: uniqueness for the model example,
transversality, and the limiting argument.
Partial results were later obtained in~\cite{BK12,Mejia25}.
The present note gives a direct proof that supplies all three steps.

We deduce Theorem~\ref{mak} from Theorem~\ref{main} as follows.
For a measure $\mu$ with smooth positive density, each unit vector
$u$ determines a unique bisecting hyperplane, and the imbalance
of $\mu$ across each pair of hyperplanes defines a continuous
$B_d$-equivariant map $F:O(d)\to W_2^+$.
A zero of $F$ gives the desired orthogonal configuration;
the general case follows by Gaussian approximation.

The result is also a special case of the general orthogonal mass
partition theorem of~\cite{Mus26}, whose proof uses characteristic
classes on Stiefel manifolds.  The proof here instead uses a single
explicit model map and mod-$2$ degree.

\section{Proofs}

\subsection{Proof of Theorem~\ref{main}}\label{sec:proof-main}

We use the following standard mod-$2$ form of equivariant degree.

\begin{lemma}[Equivariant parity principle]\label{lem:degree}
Let a finite group $G$ act freely and smoothly on a closed
$n$-manifold $M$, and let $V$ be an $n$-dimensional real
$G$-representation.  Suppose that a smooth $G$-equivariant map
$h:M\to V$ has precisely one $G$-orbit of zeros and that every zero
is nondegenerate.  Then every continuous $G$-equivariant map
$f:M\to V$ has a zero.
\end{lemma}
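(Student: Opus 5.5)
We argue by contradiction, using a straight-line equivariant homotopy and counting zeros mod $2$ on the orbit space. Suppose $f:M\to V$ is continuous, $G$-equivariant and nowhere zero.

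\emph{Step 1 (smoothing).} Since $M$ is compact, $|f|\ge\varepsilon>0$ on $M$. Approximate $f$ uniformly within $\varepsilon/2$ by a smooth map $f_0$, and average over the group:
\[
f_1(x)=\frac{1}{|G|}\sum_{g\in G} g^{-1}f_0(gx).
\]
Then $f_1$ is smooth and $G$-equivariant. Fix a $G$-invariant inner product on $V$, so each $g$ acts isometrically. Then $f_1$ stays within $\varepsilon/2$ of $f$, hence is also nowhere zero. So we may assume $f$ is smooth.

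\emph{Step 2 (homotopy and transversality).} Consider the equivariant homotopy $H(x,t)=(1-t)h(x)+tf(x)$ on $M\times[0,1]$, where $G$ acts trivially on $[0,1]$. The action on $M\times[0,1]$ is free, so the quotient $N=(M\times[0,1])/G$ is a compact $(n+1)$-manifold with boundary. Equivariant maps $M\times[0,1]\to V$ correspond to sections of the vector bundle $E=(M\times[0,1]\times V)/G\to N$, whose rank is $n$. Thus $H$ defines a section $s$ of $E$.

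Near $t=0$ the section is already transverse to zero, since the zeros of $h$ are nondegenerate; near $t=1$ it has no zeros. Perturb $s$ in the interior of $N$, keeping it fixed near the boundary, to a section transverse to the zero section. This is standard transversality for sections of vector bundles; equivalently, perturb $H$ equivariantly, which is possible because the action is free, using local trivializations over slices. The zero set of the perturbed section is then a compact $1$-manifold $Z\subset N$ with $\partial Z=Z\cap\partial N$.

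\emph{Step 3 (parity count).} A compact $1$-manifold has an even number of boundary points. Over $t=0$ the boundary points of $Z$ are the zeros of $h$ in $M/G$, and there is exactly one of them, the single orbit. Over $t=1$ there are none, because $f$ has no zeros. So $|\partial Z|=1$, which is odd, a contradiction. Hence $f$ has a zero.

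\emph{Main obstacle.} The delicate point is the equivariant transversality in Step 2. Passing to the quotient bundle $E$ over $N$ reduces it to ordinary transversality for sections, and this reduction works precisely because the action is free. One also needs the zero of $h$ to count as a single transverse point in the quotient. This holds because freeness makes the projection $M\to M/G$ a local diffeomorphism, so nondegeneracy of $dh$ at each point of the orbit gives transversality of the induced section at the image point.
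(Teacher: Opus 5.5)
Your proof is correct, but for the key step it takes a different route from the paper. Both arguments pass to the quotient bundle $E=M\times_G V$; you work over $(M/G)\times[0,1]$. The paper then argues cohomologically. The section induced by $h$ has exactly one transverse zero, so $\langle w_n(E),[M/G]\rangle=1$. Hence $w_n(E)\neq 0$, whereas a nowhere-zero section would force $w_n(E)=0$.

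You instead prove directly that the mod-$2$ count of transverse zeros is invariant under equivariant homotopy. You use relative transversality on $(M/G)\times[0,1]$ and the fact that a compact $1$-manifold has an even number of boundary points. In effect you reprove, by the standard cobordism argument, the fact the paper quotes: that the mod-$2$ count of transverse zeros of a generic section computes the top Stiefel--Whitney class. Your version is more elementary and self-contained. It also makes explicit the equivariant smoothing step, by averaging over $G$, which the paper dismisses as ``handled by approximation.''

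The paper's formulation is shorter and states the result in the language of obstruction classes. Its concluding remarks use exactly that language: the general problem is posed as showing that the mod-$2$ Euler class of a bundle over $V_{d,k}/B_k$ is nonzero. That could be attacked by characteristic-class computations without any model map, whereas your argument needs an explicit map with an odd number of transverse zero orbits.

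One small point of ordering: fix the $G$-invariant inner product before choosing $\varepsilon$ and $f_0$. Then the estimate $|g^{-1}f_0(gx)-f(x)|=|f_0(gx)-f(gx)|<\varepsilon/2$ holds, and the averaged map $f_1$ stays within $\varepsilon/2$ of $f$.
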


\begin{proof}
Equivariant maps $M\to V$ are sections of the rank-$n$ bundle
$E=M\times_GV\to M/G$.  The section induced by $h$ has one transverse
zero, so
$\langle w_n(E),[M/G]\rangle=1\in\mathbb F_2$.
Thus $w_n(E)\ne0$, whereas a nowhere-zero section would force
$w_n(E)=0$.  The conclusion follows, with continuous sections handled
by approximation.  This is also the mod-$2$ equivariant degree
principle; see~\cite[Theorem~1]{Mus12}.
\end{proof}

\begin{proof}[Proof of Theorem~\ref{main}]
By Lemma~\ref{lem:degree}, it is enough to construct a smooth model map
whose zero set is a single free $B_k$-orbit and whose zeros are
nondegenerate.  We verify the three required facts:
\textit{Equivariance}, \textit{Zero set}, and \textit{Nondegeneracy}.

Fix pairwise distinct numbers $\alpha_1<\cdots<\alpha_k$ and, in the
standard orthonormal basis $e_1,\ldots,e_k$ of $\mathbb{R}^k$, take the
diagonal self-adjoint operator
\[
 A=\operatorname{diag}(\alpha_1,\ldots,\alpha_k),
 \qquad Ae_i=\alpha_i e_i.
\]
Define
\[
 h:O(k)\longrightarrow W_2^+,
 \qquad
 h(u_1,\ldots,u_k)
 =\sum_{1\le i<j\le k}
 \langle Au_i,u_j\rangle\, e_{\{i,j\}}.
\]

\textit{Equivariance.}
Negating $u_\ell$ negates $\langle Au_i,u_j\rangle$ exactly when
$\ell\in\{i,j\}$, matching the sign action on $e_{\{i,j\}}$.
Permuting the frame vectors permutes the index pairs.
Hence $h$ is $B_k$-equivariant.

\textit{Zero set.}
Write $U$ for the orthogonal matrix with columns $u_1,\ldots,u_k$.
The matrix of $A$ in this basis is $U^TAU$, and its $(i,j)$-entry is
\[
 (U^TAU)_{ij}=\langle Au_j,u_i\rangle.
\]
Since $A$ is self-adjoint, this matrix is symmetric and
$\langle Au_j,u_i\rangle=\langle Au_i,u_j\rangle$.  Consequently,
$h(U)=0$ exactly when every off-diagonal entry of $U^TAU$ vanishes,
that is, exactly when $U^TAU$ is diagonal.  In that case each column
$u_i$ is an eigenvector of $A$.  The spectrum is simple, so every
eigenspace is one-dimensional; hence
\[
 u_i=\varepsilon_i e_{\sigma(i)}
\]
for some signs $\varepsilon_i\in\{\pm1\}$ and a permutation
$\sigma\in S_k$.  Conversely, every signed permutation of the standard
basis is clearly a zero.  Therefore
$Z_h=B_k\cdot(e_1,\ldots,e_k)$ is a single free $B_k$-orbit.

\textit{Nondegeneracy.}
Identify tangent vectors to $O(k)$ at the eigenframe
$U_0=(e_1,\ldots,e_k)$ with skew-symmetric matrices $X=(x_{ij})$.
This follows by differentiating $U(t)^TU(t)=I$.  Along
$U(t)=U_0\exp(tX)$, the $i$-th column satisfies
\[
 u_i'(0)=\sum_p x_{pi}e_p.
\]
For $i<j$, the product rule gives
\begin{align*}
 (Dh)_{U_0}(X)_{\{i,j\}}
 &=\langle Au_i'(0),e_j\rangle
   +\langle Ae_i,u_j'(0)\rangle\\
 &=\alpha_jx_{ji}+\alpha_ix_{ij}
  =(\alpha_i-\alpha_j)x_{ij},
\end{align*}
because $x_{ji}=-x_{ij}$.  The variables $x_{ij}$, $i<j$, are
coordinates on $T_{U_0}O(k)$, and the same pairs index a basis of
$W_2^+$.  Thus $(Dh)_{U_0}$ is diagonal with nonzero diagonal entries
$\alpha_i-\alpha_j$.  With the coordinate pairs ordered in the same way,
\[
 \det (Dh)_{U_0}=\prod_{1\le i<j\le k}(\alpha_i-\alpha_j)\ne0.
\]
Hence $(Dh)_{U_0}$ is an isomorphism, so $U_0$ is an isolated zero and
is nondegenerate.  Equivariance gives the same conclusion at every
other zero.  Thus $Z_h$ is one free orbit of nondegenerate zeros, and
Lemma~\ref{lem:degree} shows that every continuous
$B_k$-equivariant map $O(k)\to W_2^+$ has a zero.
\end{proof}

\subsection{Proof of Theorem~\ref{mak}}\label{sec:proof-mak}

Throughout, a \emph{measure} is a finite nonzero Borel measure on
$\R^d$ assigning zero mass to every hyperplane.

\begin{proof}
We begin with the main idea.  For every unit vector $u$, choose the
hyperplane normal to $u$ that
bisects the measure.  An orthonormal frame then gives $d$ mutually
orthogonal bisecting hyperplanes.  For each pair, we record the signed
imbalance between the two opposite pairs of quadrants.  These
$\binom d2$ numbers form a $B_d$-equivariant map
$F:O(d)\to W_2^+$.  Theorem~\ref{main} supplies a frame at which all
imbalances vanish, and the bisection conditions then force all four
quadrants to have equal measure.  We first carry this out for a smooth
positive density and then pass to a general measure by approximation.

\textit{Smooth case.}
Assume first that $\mu$ has a smooth strictly positive density.
For each unit vector $u$, there is a unique bisecting hyperplane
$H(u)=\{x:\langle x,u\rangle=t(u)\}$; set
$H^+(u)=\{x:\langle x,u\rangle\ge t(u)\}$ and
$H^-(u)=\{x:\langle x,u\rangle\le t(u)\}$.  The assignment $u\mapsto t(u)$ is
continuous and odd.  Indeed, for fixed $u$ the distribution function
$s\mapsto\mu\{x:\langle x,u\rangle\le s\}$ is continuous and strictly
increasing from $0$ to $\mu(\R^d)$, giving existence and uniqueness.
The offsets remain bounded because every bisecting hyperplane meets a
fixed ball containing more than half of the mass; dominated convergence
and uniqueness then give continuity in $u$.  Reversing $u$ exchanges the
two half-spaces, so $t(-u)=-t(u)$.

Define $F:O(d)\to W_2^+$ by
\[
 \begin{aligned}
 F(U)_{ij}={}&\mu\bigl(H^+(u_i)\cap H^+(u_j)\bigr)
            -\mu\bigl(H^+(u_i)\cap H^-(u_j)\bigr)\\
            &-\mu\bigl(H^-(u_i)\cap H^+(u_j)\bigr)
            +\mu\bigl(H^-(u_i)\cap H^-(u_j)\bigr).
 \end{aligned}
\]
Reversing $u_i$ or $u_j$ negates $F_{ij}$, and permuting frame
vectors permutes coordinates, so $F$ is $B_d$-equivariant.  Continuity
follows from the continuity of the offsets and dominated convergence,
since the boundaries of the moving quadrants have measure zero.

By Theorem~\ref{main}, $F(U)=0$ for some $U$.  For each pair $i<j$,
let $a,b,c,e$ be the measures of the four quadrants cut by $H(u_i)$
and $H(u_j)$.  The bisection conditions give $a+b=a+c=\tfrac12\mu(\R^d)$,
so $b=c$.  The condition $F_{ij}=0$ gives $a+e=b+c=2b$, while
$a+b+c+e=\mu(\R^d)$ gives $a+e=\mu(\R^d)-2b$, so $b=c=\tfrac14\mu(\R^d)$
and $a=e=\tfrac14\mu(\R^d)$.  The frame $U$ is orthonormal, so the
$d$ hyperplanes are mutually orthogonal.

\textit{General case.}
Let $M=\mu(\R^d)$ and put $\mu_r=\mu*\gamma_{1/r}$.
Then $\mu_r$ has a smooth strictly positive density and
$\mu_r\Rightarrow\mu$.  For each $r$, the smooth case gives an
orthonormal frame $U_r=(u_{1,r},\ldots,u_{d,r})$ and corresponding
offsets $t_{1,r},\ldots,t_{d,r}$.

Choose $R$ such that $\mu(\operatorname{int}B_R)>M/2$.  By weak
convergence, $\mu_r(B_R)>M/2$ for all sufficiently large $r$.
Hence every bisecting
hyperplane for $\mu_r$ must meet $B_R$, and consequently
$|t_{i,r}|\le R$.  By compactness, after passing to a subsequence we
may assume that $U_r\to U=(u_1,\ldots,u_d)$ and $t_{i,r}\to t_i$ for
$1\le i\le d$.

We use the standard continuity-set principle: under weak convergence,
intersections of finitely many moving half-spaces have convergent
measures when their normals and offsets converge and the limiting
boundary has measure zero.  It applies to every quadrant here, since its
boundary lies in the union of two limiting hyperplanes and hence has
$\mu$-measure zero.

Every approximating quadrant has $\mu_r$-measure $M/4$.  Hence every
limiting quadrant has $\mu$-measure $M/4$.  Since $U$ is still an
orthonormal frame, the limiting hyperplanes are mutually orthogonal and
give the required four-partitions.
\end{proof}

\section{Concluding remarks and further directions}
For a nonempty subset \(I\subset[k]\), put
\[
 \lambda_I=\sum_{i\in I}\lambda_i, \qquad W_j^+=\bigoplus_{\substack{I\subset[k]\\ |I|=j}}
 \R_{\lambda_I}, \quad 1\le j\le k.
\]
Let \(e_I\) denote a basis vector of the summand
\(\R_{\lambda_I}\), with \(S_k\) acting by
\(\sigma e_I=e_{\sigma(I)}\).
Together with the $A_k$-action, this makes \(W_j^+\) a
\(B_k\)-representation.  In the present note we use only $j=2$,
for which $\dim W_2^+=\binom{k}{2}=\dim O(k)$.

The construction suggests a programme for more general orthogonal
partition problems.  After the individual bisection conditions have
been imposed, the requirement that every $n$-element subfamily divide
the measure into $2^n$ equal parts is encoded by a test map
\[
 V_{d,k}\longrightarrow \bigoplus_{j=2}^{n}W_j^+.
\]
The problem is to prove that the associated bundle over $V_{d,k}/B_k$
has nonzero mod-$2$ Euler class, or, in the dimension-matched case, to
construct a model map with an odd number of free $B_k$-orbits of zeros.
The diagonal-operator construction solves the case $n=2$, $d=k$.
For $j\ge3$, generic symmetric tensors need not admit an orthogonal
diagonalization, so new constructions or direct characteristic-class
computations are required; see also~\cite{Mus26}.

\medskip
\noindent\small Oleg R. Musin,
University of Texas Rio Grande Valley,
Brownsville, TX 78520, USA.\\
\textit{email}: oleg.musin@utrgv.edu

\end{document}